\newtheorem{thm}{Theorem}[section]
\newtheorem{lem}[thm]{Lemma}
\newtheorem{prop}[thm]{Proposition}
\theoremstyle{definition}
\newtheorem{defn}[thm]{Definition}
\newtheorem{example}[thm]{Example}
\newtheorem{rem}[thm]{Remark}
\newtheorem{defn-thm}[thm]{Definition--Theorem}  %!!!!!!!!!!!!!!!!!!!!!!!!
\newtheorem{defn-lem}[thm]{Definition--Lemma}  %!!!!!!!!!!!!!!!!!!!!!!!!
\theoremstyle{remark}
\renewcommand{\c}[0]{{\mathbb C}}
\newcommand{\n}[0]{{\mathbb N}}
\renewcommand{\r}[0]{{\mathbb R}}
\newcommand{\p}[0]{{\mathbb P}}
\newcommand{\T}[0]{{\mathbb T}}
\newcommand{\C}{\mathbb{C}}
\def\loccoh#1.#2.#3.#4.{H^{#1}_{#2}(#3,#4)}
\DeclareMathAlphabet{\mathchanc}{OT1}{pzc}%
                                {m}{it}
\newcommand{\sym}[0]{\operatorname{Sym}}
\numberwithin{equation}{section}
\begin{document}

\title[Real identifiability vs complex identifiability]{Real identifiability vs complex identifiability}

\author[E. Angelini, C.Bocci, L. Chiantini]{Elena Angelini, Cristiano Bocci, Luca Chiantini}
\address[E.Angelini,  C. Bocci, L. Chiantini]{Dipartimento di Ingegneria dell'Informazione e Scienze 
Matematiche\\ Universit\`a di Siena\\ Via Roma 56\\ 53100 Siena, Italia}
\email{elena.angelini@unisi.it, cristiano.bocci@unisi.it, luca.chiantini@unisi.it}

%\urladdr{\blue{\tt{http://}}}

\begin{abstract} 
Let $T$ be a real tensor of (real) rank $r$. $T$ is {\it identifiable} when it has a unique decomposition in terms
of rank $1$ tensors. There are cases in which the identifiability fails over the complex field, for
general tensors of rank $r$. This behavior is quite peculiar when the rank $r$ is submaximal. Often,
the failure is due to the existence of an elliptic normal curve through general points of the corresponding
Segre, Veronese or Grassmann variety. 
We prove the existence of nonempty euclidean open subsets of some variety
of tensors of rank $r$, whose  elements have several decompositions over $\c$, but only one of them is 
formed by real summands. Thus, in the open sets, tensors are not identifiable over $\c$, but are identifiable
over $\r$.
 
We also provide  examples of  non trivial euclidean open subsets
in a whole space of symmetric tensors (of degree $7$ and $8$ in three variables) and of almost unbalanced 
tensors Segre Product ($\p^2\times \p^4\times \p^9$)
whose elements have typical real rank equal to the
complex rank, and are identifiable over $\r$, but not over $\c$.
On the contrary, we provide examples of tensors of given real rank, for which
real identifiability cannot hold in non-trivial open subsets.
\end{abstract}

\maketitle

\section{Introduction}\label{sec:intr}

Recent interest has been devoted to the uniqueness
of the decomposition of a tensor in terms of rank $1$ elements
(up to rescaling and reordering). Tensors
with a unique decomposition are called {\it identifiable}.  Tensors of a given type and fixed rank $r$
are {\it generically identifiable} when  identifiability holds in a dense open subset
of the variety of tensors of rank $r$.

Several criteria are available for generic identifiability 
(see \cite{CCi06, BocciC13, BocciCOtt13, COttVan14, COttVan, 
BallBern13}) and also for the identifiability of specific tensors 
 (see \cite{Kruskal77, BallC12, BucGinenskiLand13, CMellaOtt14,
COttVan2, DomaLath13}). All the previous methods consider tensors over the complex
field. The identifiability of real tensors has been investigated e.g.
 in \cite{DomaLath15}, \cite{BernDaleoHauensteinMour},
\cite{QiComonLim}  and \cite{COttVan2}: it turns out that generic identifiability 
over $\c$ implies generic identifiability over $\r$.

We investigate here what happens to the identifiability of real tensors {\it over $\r$}, 
when the identifiability over $\c$ fails. For {\it identifiability over $\r$} we mean that a
(real) tensor $T$ may have several decompositions in terms of complex tensors of rank $1$,
but only one of these decompositions contains only rank $1$ tensors with real entries.
Thus, only one decomposition is a {\it real decomposition}.
In the notation of \cite{MichalekMoonSturmVentura}, we are interested in sufficiently
general tensors of given rank, whose variety of real decompositions $SSP(T)_\r$ is a singleton,
while the complex variety of decompositions $VSP(T)$ is not.

\begin{table}\label{tab1}
\begin{center}
\begin{tabular} {|c|c|c|c|}
space of tensors $X$ & type & rank $r$ & notes \\ \hline & & &  \\
$(\c^2)^5$ &  & $5$ & \cite[Prop. 4.1]{BocciC13} \\
$(\c^3)^6$ & symmetric & $9$ & \cite{ArbarelloCornalba81}, \cite[Rem. 6.5]{Ci01}, \cite[Prop. 2.5]{CCi06}  \\
$(\c^4)^3$ &  & $6$ & \cite[Th.1.3]{COtt12} \\
$(\c^4)^3$ & symmetric & $5$ & \cite[Lemma 4.3]{RaneVoisin}, \cite[Prop. 2.4]{COttVan} \\
$(\c^4)^4$ & symmetric & $8$ & \cite[Rem. 4.4]{Mella06}, \cite{Ball06}\\
$(\c^{10})^3$ & skew-symmetric & $5$ & \cite[Prop. 1.9]{BernVanzo}
\end{tabular} 
\end{center}
\caption{}
\end{table}

There are cases in which general tensors of rank $r$ over $\c$
have two different decompositions in terms of rank $1$ tensors.  
Some of them are resumed in Table \ref{tab1}.
In all of these cases, the failure of generic
identifiability is due to the existence of  elliptic
normal curves passing through general sets of points of 
the space $\Sigma$ of tensors of rank $1$ (Segre or Veronese varieties).
It is known after \cite[Prop 2.5]{CCi06} that elliptic normal curves
$C$ of degree $n+1$ spanning a projective space $\p^n$ ($n=2r-1$ odd) 
have the property that a general point $P\in\p^n$ has two decompositions
in terms of $r$ points of $C$, i.e. $P$ sits in two $r$-secant
$r-1$-spaces. These two decompositions determine two complex decompositions
of a general tensor $T$ of rank $r$.

Going to tensors $T$ over the reals,  in the aforementioned cases there
are three possibilities for the two complex decompositions, namely:

\begin{itemize}
\item both decompositions are real;

\item there is one real and one non-real (auto-conjugate) decomposition;

\item both decompositions are non-real (in the sense that both contain at least one non
real element);

\end{itemize}

We will prove the existence, for {\it any} elliptic normal curve $C\subset\p^n$, of
three subsets of $\p^n$ with non-empty interior (in the euclidean topology), 
in which the previous three cases
take place respectively (see Theorem \ref{elli1} and Remark \ref{postelli}). 

The result implies  that
when there are two decompositions, due to the existence of 
families of elliptic normal curves in  
the variety of tensors of rank one, then the variety $\T$ of tensors of rank $r$
 has  non-trivial, euclidean open subsets
in which any of the three situations listed above occurs.

In particular, looking at the second case, we get the existence of a subset of $\T$,
with non trivial interior, whose elements are not identifiable over $\c$,
but  since only one of the two decompositions is real, 
these tensors are identifiable over $\r$ (see Theorem \ref{ultimo}). Thus we obtain that 
real identifiability of tensors can be different from complex identifiability,
in sets with nonzero measure.

In the last section of the paper we show other examples of spaces of tensors for
which real identifiability in a non-empty euclidean open set can be proved or 
excluded.
In particular, we provide examples of  non trivial euclidean open subsets
in a whole space of symmetric tensors, whose elements have typical real rank equal to the
complex rank, and are identifiable over $\r$, but not over $\c$ (i.e. they  have several 
decompositions over $\c$, only one of which is completely real).
Examples of this type (one of which is essentially contained in \cite
[Proposition 5.6]{MichalekMoonSturmVentura}),
prove that while  identifiability for general symmetric tensors seems quite rare over $\c$
(see \cite{GaluppiMella} for ternary form), in fact it can hold over $\r$ in sets with
nonzero measure.

We hope that these remarks could be useful for applications. Also, we wonder if
real identifiability in non trivial open sets, which is determined here only in some specific case,
could be proved in higher generality.

%\begin{ack} ...
%\end{ack}

\section{Elliptic normal quartics}\label{sec:Ell4}

In the complex projective space $ \p^3=\p^3_\c $, with homogeneous coordinates 
$ [x_{0}: x_{1}: x_{2}: x_{3}] $, an elliptic normal curve is complete intersection
of two quadrics.

Conversely, every curve $C$ (of degree $4$) defined by the 
intersection of two quadric surfaces has arithmetic
genus $1$. If $C$ is irreducible, then it corresponds to the embedding of a complex torus
of (complex) dimension $1$ inside $\p^3$, i.e. to an elliptic normal curve.

The projection $C'$ of  $C$ to a plane from a general point is an irreducible
plane quartic. By the known formula which links the arithmetic and geometric
genera of plane curves, $C'$ has two nodes. This implies that a general point
of $\p^3$ sits in two secant lines to $C$. Thus a general point of $\p^3$ has
(complex) rank $2$ with respect to $C$. The exceptions are points
which lie on some tangent line to $C$.
 
When $C=Q\cap Q'$ and $Q,Q'$ are quadrics defined over the real field, then the
homogeneous ideal of $C$ is
defined by real equations. Yet, it may happen that $C$ has no real points.

We will say that $C$ is a {\it real elliptic normal quartic} when it is defined by
two real quadratic equations {\it and} has infinitely many real points.

We prove that for {\it any} irreducible real elliptic normal
quartic there are real planes that meet $C$ in $4,2,0$ real points
respectively (this fact is probably well known, but we could not
find a reference in the literature).

\begin{rem} Given a real elliptic normal quartic $C$, there are planes which meet $C$ 
in $4$ real points: enough to take $3$ real points of $C$ and the plane through them.
\end{rem}

\begin{rem} Given a real elliptic normal quartic $C$, there are planes which meet $C$ 
in $2$ real and $2$ non-real points.

Indeed take a real line $L$ which does not meet $C$. The intersections of a non-real
plane $p$, passing through $L$, with $C$ cannot be real, since all the real points
of $p$ lie in $L$. Thus $C$ contains a non-real point $P$. Therefore it contains also
the conjugate $\bar P$. If $Q$ is any real point of $C$, then $P,\bar P$ and $Q$ are
contained in a real plane $p'$. Such plane is not tangent to $C$ at $Q$, 
for a general choice of $Q$. Thus $p'$ meets $C$ in another real point $Q'$.
\end{rem}

\begin{rem} Given a real elliptic normal quartic $C$, there are planes which meet $C$ 
in $4$ non-real, pairwise conjugate points.

The claim follows from a well known principle: a real curve cannot cross a
regular tangent line. 

Consider a secant line $L$ to $C\subset\p^3$ which meets $C$ in two non-real, conjugate
points $A_1,A_2$. The projection $C'$ of $C$ from a general real point $P_0$ of $L$ 
to a real plane $\pi$ is a real
plane curve with a node $Q$ with two non-real tangent lines, i.e. a real node which is
isolated in the real plane $\pi$. If we exhibit a line $\ell$ in the plane of $C'$ which meets
$C'$ in $Q$ and two non-real  points, we are done: the real plane
spanned by $\ell$ and $P_0$ meets $C$ in $A_1,A_2$ and in two other points which
cannot be real, for their projections from $P_0$ are non-real.

So, consider the elliptic plane quartic $C'$. It has two singular points, one of them being $Q$.
We will prove that there exists a line through $Q$ meeting $C'$ in two conjugate,
non-real points.

After a change of coordinates $x,y,z$ in $\pi$, we may assume that $C'$ is singular   
at $Q=[0:1:0]$ and $Q'=[1:0:0]$, and that $Q''=[0:0:1]$ belongs to $C'$, with tangent line $x=0$.
The existence of a line passing through $ Q $ and tangent to $C'$ in a further point $ Q'' $ follows 
from the fact that the projection from $ Q $ of $C'$ is ramified at  points which can't coincide with
$ Q $, $ Q' $ by construction.
Thus the equation of $C'$ is:
\begin{equation}\label{eqform}
axz^3+z^2(cx^2+bxy+dy^2)+z(ex^2y+fxy^2)+gx^2y^2=0.
\end{equation}
We cannot have $d=0$, otherwise the line $x=0$ is a component of $C'$.
We cannot have $a=0$, otherwise also $Q''=[0:0:1]$ is a singular point, and
$C'$ is a rational curve.

Now consider affine coordinates around $Q''$, by setting $z=1$. Consider the pencil
of vertical lines $\ell_h$ with equations $x=h$, $h\in\r$. For $h=0$, $\ell_0$ intersects $C'$ at $Q''$ 
with multiplicity $2$, by construction. After substituting $x$ with the constant $h$ in (\ref{eqform}), 
we find a quadratic equation in $y$, whose discriminant is:
$$\Delta = h(-4ad-4fah-4agh^2-4cgh^3-4cfh^2-4cdh+e^2h^3+2ebh^2+b^2h).$$    
When $h$ is small enough, the term $(-4ad-4fah-4agh^2-4cgh^3-4cfh^2-4cdh+e^2h^3+2ebh^2+b^2h)$
has the sign of $-4ad\neq 0$. Hence the sign of $\Delta$ changes as $h$ passes from the negative to the
positive semi-axis. It follows that for $\epsilon>0$ sufficiently small, either the line $x=\epsilon z$
or the line $x=-\epsilon z$ (both passing through $Q$) meets $C'$ in two non-real,
conjugate points. The claim follows.
\end{rem}

Consider a point $P\in\p^3_\r$, such that there are exactly two secant lines $l(P), l'(P)$
to $C$, passing through $P$.

We will say that $P$ is of type:

\begin{itemize}
\item[s(1)] if the intersections $l(P)\cap C, l'(P)\cap C$ are four real points.

\item[s(2)] if the intersections $l(P)\cap C$ are both real points, while $l'(P)\cap C$ are 
two non-real (conjugate) points.

\item[s(3)] if the intersections $l(P)\cap C$ are non-real, conjugate points and also
$l'(P)\cap C$ are non-real, conjugate points.

\item[s(4)] if the intersections $l(P)\cap C$ are non-real points whose conjugate
points correspond to $l'(P)\cap C$.
\end{itemize}

Notice that in cases s(1) -- s(3) the two lines $l(P),l'(P)$ are real lines, while in case
s(4) the lines $l(P),l'(P)$ are non-real, and conjugate each other.

\begin{rem}\label{nocones} Assume that $P\notin C$ is a point contained in more than two
secant or tangent lines to $C$. Then the projection of $C$ from $P$ cannot be birational, 
for it would map $C$ to an elliptic irreducible plane quartic with at least $3$ singular points.
Thus $P$ is the vertex of a cone which projects $C$ to a conic, i.e. a quadric cone.
Since $C$ is the complete intersection of two quadrics, which cannot be two
cones which intersect in a line through the vertexes, since $C$ is irreducible, then $C$ sits only in
finitely many quadric cones. It follows that, with finitely many exceptions, the 
points of $\p^3\setminus C$ sit in at most two secant or tangent lines. 
\end{rem}

\begin{rem} \label{moveplane} If $\pi$ is any plane which meets $C$ in 4 
distinct points, and the four points are real (resp. non-real,  resp. $2$ real and $2$ non-real)
then the same happens for all planes in a sufficiently small euclidean
neighborhood of $\pi$ in the space of planes in $\p^3$. 

It follows by the previous remark that one can find planes which meet $C$ in $4$ real 
(resp. $4$ non-real,  resp. $2$ real and $2$ non-real) points and miss the 
vertexes of the quadric cones passing through $C$.
\end{rem}

We can collect the remarks above in the following.

\begin{prop}\label{cases-ball} For any real elliptic normal quartic $C$ 
there are points $P\in \p^3_\r$ of all the four types  {\rm s(1) -- s(4)}.

Moreover, for each type {\rm s($i$)}, $i=1,\dots,4$, there exists a non empty open ball in 
$\p^3_\r$ entirely composed of points of type {\rm s($i$)}.\end{prop}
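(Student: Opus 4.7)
The plan is to construct a point of each type explicitly from the three preceding remarks, and then promote each existence statement to an open set by a continuity argument.

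The starting point is the bijection, modulo finitely many exceptions, between real points $P\in\p^3_\r$ lying on exactly two secants to $C$ and pairs (plane $\pi$ meeting $C$ in four points, partition of those four points into two pairs). Indeed, by Remark \ref{nocones} all but finitely many $P$ lie on exactly two secant or tangent lines of $C$; the two secants through $P$ span a unique plane $\pi_P$, and the four intersections $\pi_P\cap C$ are the four endpoints. Conversely, given a plane $\pi$ meeting $C$ transversally in four points $A_1,\dots,A_4$, each of the three pair-partitions of $\{A_1,\dots,A_4\}$ produces two secants of $C$ in $\pi$, whose intersection is a point lying on exactly two secants (once one avoids the finitely many cone vertices of Remark \ref{nocones}).

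Using this correspondence together with the three remarks before Remark \ref{nocones}, existence in each type is immediate. A plane meeting $C$ in four real points yields, via any of the three partitions, a point with two real secants each meeting $C$ in two real points: this is type s(1). A plane meeting $C$ in two real points $A_1,A_2$ and a conjugate pair $A_3,\bar A_3$ yields, via the partition $\{A_1,A_2\}\sqcup\{A_3,\bar A_3\}$, a real intersection point of two real secants, one meeting $C$ in two real and the other in two non-real conjugate points: this is type s(2). A plane meeting $C$ in two conjugate pairs $A,\bar A,B,\bar B$ yields a point of type s(3) from the partition $\{A,\bar A\}\sqcup\{B,\bar B\}$ (two real secants, each cutting $C$ in a non-real conjugate pair), and a point of type s(4) from either of the partitions $\{A,B\}\sqcup\{\bar A,\bar B\}$ or $\{A,\bar B\}\sqcup\{\bar A,B\}$ (two non-real conjugate secants meeting at a real point, whose intersections with $C$ are conjugate to each other).

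To upgrade this to a non-empty open ball of each type, one argues that the combinatorial type is locally constant. Near any $P$ of type s($i$) which is not a cone vertex, the two secants through $P$ vary real-analytically with $P$, hence so do the four intersection points of $\pi_P$ with $C$ and the partition of these four points induced by the pairing through $P$. Under a small real perturbation of $P$, a real intersection point remains real, and a non-real conjugate pair remains a non-real conjugate pair: the pair could only cease to be conjugate by first coinciding on $\r$, which is a closed condition excluded in a sufficiently small neighborhood. Since the partition into the two secants through $P$ is also locally constant, the reality pattern of the four intersections and of the two secants is preserved, and $P$ has an open neighborhood in $\p^3_\r$ consisting of points of the same type s($i$).

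The main obstacle is the openness argument of the last paragraph: one must ensure simultaneously that the two-secant structure survives the perturbation (handled by avoiding the finite set of cone vertices from Remark \ref{nocones}) and that the conjugation action on $\pi_P\cap C$ does not change combinatorially, which follows from the fact that coincidences among the four points form a closed lower-dimensional locus missed by a generic $P$ of each type.
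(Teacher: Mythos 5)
Your proof is correct and takes essentially the same approach as the paper: existence of a point of each type is obtained from a plane section of $C$ with the appropriate reality pattern together with a choice of conjugation-compatible pairing of its four intersection points, and openness follows from the local continuity of the two secants through $P$ (away from the tangent developable and the finitely many cone vertices) together with the fact that the reality type of the four points and of the two lines is locally constant. The paper packages the continuity step as a continuous map $\phi\colon U\to(\p^3)^\vee$ sending $P$ to the plane of its two secants, but this is the same argument phrased slightly differently.
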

\begin{proof}  For the existence, take general planes $\pi_1,\pi_2,\pi_3$
such that $\pi_1\cap C$ has $4$ real points, $\pi_2\cap C$ has two real and two
non-real points and $\pi_3\cap C$ has $4$ non-real points. Call $\pi_i\cap C=
\{A_i,B_i,C_i,D_i\}$. The point of intersection of the two lines $\ell_{A_1B_1}\cap 
\ell_{C_1D_1}$ is of type s(1), because, by Remark \ref{moveplane}, moving slightly 
the plane $ \pi_{1} $ such point moves away from the vertexes of the quadric cones 
containing $ C $. 
Similarly, if $A_2,B_2$ are real and $C_2,D_2$ 
are complex conjugate, then the point of intersection of the two real lines 
$\ell_{A_2B_2} \cap \ell_{C_2D_2}$ is of type s(2). If $A_3,B_3$ and $C_3,D_3$ are 
two pairs of conjugate (non-real) points, then the point $\ell_{A_3B_3}\cap 
\ell_{C_3D_3}$ has type s(3),
while the point $\ell_{A_3D_3}\cap \ell_{B_3C_3}$ (which is real because it sits in
the intersection of two conjugate lines) has type s(4).
 
Take now a point $P\in\p^3_\r$ through which there are exactly two secant lines $l(P),l'(P)$
(this means also that $P$ lies outside the tangent developable of $C$). In a small
euclidean neighbourhood $U$ of $P$, which does not intersect the
tangent developable, one can define a map $\phi:U\to(\p^3)^{\vee}$
which sends $P_\epsilon$ to the plane spanned by the two lines 
$l(P_\epsilon),l'(P_\epsilon)$. The map is clearly continuous. 

Assume now that $l(P)\cap C,l'(P)\cap C$ are $4$ real points.
Since the  plane $\phi(P)$ meets $C$ in $4$ real points, the same is true for
planes in a small neighborhood of $\phi(P)$, whose inverse image defines
a set of points around $P$ sitting in secant lines that meet $C$ in $4$ real points.
Thus the claim holds for points $P$ of type s(1).

The same argument yields the conclusion for points of type s(2).

For points $P$ of type s(3), act as before.  It turns out that there exists a small
neighborhood of $P$ formed by points $P_\epsilon$ such that 
$l(P_\epsilon),l'(P_\epsilon)$ both meet $C$ in distinct, non-real points.
Since $l(P),l'(P)$ are both real, hence they are not conjugate each other,
the same ought to be true for points $P_\mu$ in a small
neighborhood of $P$, since the lines $l(P),l'(P)$ move continuously with $P$.
Thus, there exists a neighborhood of $P$ whose points are of type s(3).

Finally, for the case s(4), just observe that if $l(P),l'(P)$ are both non-real,
then the same holds for points $P_\epsilon$ in a suitably small
neighborhood of $P$.
\end{proof}

We provide an example which shows that, \emph{near} planes tangent to $C $ at a real point, 
there are both planes that cut $C $ in complex conjugate points
 and planes that cut $C$ in distinct real points.

\begin{example}
Let us consider the two conjugate points 
$$ A = [1:i:0:0], \qquad B = [1:-i:0:0] $$
and let us denote by $ \ell_{AB} $ the line through them, which  is defined by 
$$
\left\{\begin{array}{rcl}x_{2}&=&0\\
x_{3}&=&0\end{array}\right.
$$

Now, let us consider in $ \mathbb{P}^{3}_{\mathbb{C}} $ the real non degenerate 
quadrics $ Q_{1} $ and $ Q_{2} $ given by:
\begin{gather*} Q_{1}: x_{0}^{2}+x_{1}^{2}-x_{2}^{2}-x_{3}^{2} = 0 \\
 Q_{2}: x_{0}^{2}-x_{0}x_{3}+x_{1}^{2}-x_{1}x_{3}-2x_{2}^{2}-2x_{3}^{2} = 0. 
\end{gather*}
It's not hard to see that $ Q_{1} $ and $ Q_{2} $ are one-sheeted hyperboloids 
passing through  $ A $ and $ B $.

We are interested in the real quartic elliptic curve $ C = Q_{1} \cap Q_{2}, $
which is endowed with non-singular real points (for example, $ P = [0:1:0:-1] $). 

Consider then the pencil of planes $ \mathcal{F} $ having in common 
$ \ell_{AB} $, whose generic element is defined by $ \lambda x_{2} + \mu x_{3} = 0$,
and assume $ \lambda \not= 0 $, so that we restrict to element of $ \mathcal{F} $  given by 
$$ x_{2}=kx_{3}.$$
where we take $k$ to be a real parameter.

Our aim is to classify the intersections of these elements of $ \mathcal{F} $ with $C$,
 besides $ A $ and $ B $, that is to solve the polynomial system given by
\begin{equation}\label{eq:polsys}
\left\{\begin{array}{rcl}x_{2}-kx_{3}&=&0\\
x_{0}^{2}+x_{1}^{2}-x_{2}^{2}-x_{3}^{2}&=&0\\
x_{0}^{2}-x_{0}x_{3}+x_{1}^{2}-x_{1}x_{3}-2x_{2}^{2}-2x_{3}^{2}&=&0\end{array}\right.
\end{equation}
After some manipulation, (\ref{eq:polsys}) reduces to:
 \begin{equation}\label{eq:3}
\left\{\begin{array}{rcl}x_{2}&=&kx_{3}\\
x_{3}&=&-\frac{x_{0}+x_{1}}{1+k^{2}}\\
x_{0}^{2}+x_{1}^{2}&=&\frac{(x_{0}+x_{1})^{2}}{1+k^{2}}
\end{array}\right.
\end{equation}
Now let us consider the last equation, which is equivalent to
$$ k^{2}x_{0}^{2}-2x_{0}x_{1}+k^{2}x_{1}^{2}=0. $$

It turns out that for $ -1 < k < 1 $ the corresponding planes intersect $C$ in two distinct 
real points (beside $A$ and $B$), while for $ k > 1 $ or $ k < -1 $ the corresponding 
planes intersect $C$ in two complex  conjugate points.
For the two critical values $ k = +1 $ or $ k = -1 $, the corresponding plane intersects $C$ 
in one  (double) real point (i.e the planes are tangent to $C$ at a real point).

In particular, for $ k = 1 $, we get the plane  $x_{2}=x_{3}$ which is tangent to $C$ 
at $ D= [1:1:-1:-1] $. For any small
$\epsilon>0$, the planes corresponding to  $k=1-\epsilon$ intersect $C$ in ($A,B$ and)
two distinct  real points, while the planes corresponding to  $k=1+\epsilon$ intersect
$C$ in  two complex conjugate points.
\end{example}

\begin{rem} \label{case s(2)} Notice that, if $ P $ is a real tensor of (complex) rank two 
having a decomposition $ P=T+T' $ with a real summand $T$, then also
the second tensor $T'$ is real, i.e. the decomposition is real (and the real rank is $2$).
\end{rem}

\section{Elliptic normal curves in odd-dimensional spaces}\label{sec:Ellodd}

\begin{defn}
Let $C\subset \p^{2r-1}$ be a real elliptic (irreducible) curve of degree $2r$. 
We say that a hyperplane $H$ has type $(2t,2r-2t)$ if it intersects $C$ properly in 
$2t$ real points and $2r-2t$ complex non-real points.
\end{defn}

\begin{lem}\label{hypertype}
Let $C\subset \p^{2r-1}$ be a real elliptic (irreducible) curve  of degree $2r$. Then for any 
$t=0, \dots, r$ there exists a hyperplane of type $(2t,2r-2t)$.
\end{lem}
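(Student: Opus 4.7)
The plan is to argue by induction on $r$. The base case $r=2$ is exactly the content of the three opening remarks of Section~\ref{sec:Ell4} (also used in the proof of Proposition~\ref{cases-ball}): they produce real planes in $\p^3$ meeting an elliptic normal quartic in $4$, $2$, and $0$ real points, which is the statement for $r=2$.

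For the inductive step, I would assume the lemma for real elliptic normal curves of degree $2(r-1)$ in $\p^{2r-3}$, and fix $r \geq 3$ together with a target type $(2t, 2r-2t)$. The strategy is to pull back a hyperplane of the right type along a real projection. I would pick a conjugation-invariant pair on $C$: two real points $P_1,P_2\in C(\r)$ if $t \geq 1$, or a conjugate pair $Q,\bar Q \in C \setminus C(\r)$ of non-real points if $t=0$ (such non-real points exist because $C$ has complex dimension $1$ while its real locus is $1$-real-dimensional). In either case the line $\ell$ through the chosen pair is real, so the projection $\pi_\ell:\p^{2r-1} \dashrightarrow \p^{2r-3}$ is defined over $\r$. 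The restriction to $C$ extends to a morphism $\tilde\pi: C \to \p^{2r-3}$ associated to the real line bundle $\o_C(1) \otimes \o_C(-\ell \cap C)$ of degree $2r-2$. Since every line bundle of degree $\geq 3$ on an elliptic curve is very ample and $2r-2 \geq 4$, the map $\tilde\pi$ is a closed embedding, and its image $C'' \subset \p^{2r-3}$ is a real elliptic normal curve of degree $2(r-1)$ with infinite real locus containing $\tilde\pi(C(\r))$.

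By the inductive hypothesis applied to $C''$, there exists a real hyperplane $H''\subset\p^{2r-3}$ of type $(2(t-1), 2(r-t))$ when $t\geq 1$, or of type $(0, 2(r-1))$ when $t=0$. After a small real perturbation within its type---an open condition---I may further assume that $H''$ is transverse to $C''$ and avoids $\tilde\pi(\ell \cap C)$. The preimage $H := \pi_\ell^{-1}(H'')$ is then a real hyperplane of $\p^{2r-1}$ containing $\ell$, and its intersection with $C$ decomposes as
\[
H \cap C \;=\; (\ell \cap C) \;+\; \tilde\pi^{-1}(H'' \cap C''),
\]
a sum of $2r$ distinct points. Because $\tilde\pi$ is defined over $\r$, it preserves the real or non-real character of intersection points; hence counting real points in each summand yields exactly $2t$ real and $2r-2t$ non-real points in $H \cap C$ in both the $t\geq 1$ and $t=0$ cases.

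The main obstacle I expect is verifying that the projection really realizes $C''$ as a real elliptic normal curve of degree $2(r-1)$ in $\p^{2r-3}$: this relies on the very ampleness of the degree $2r-2 \geq 4$ line bundle (a classical fact on elliptic curves) and on the transfer of the real structure across a real projection from a Galois-stable line. The genericity required to avoid $\tilde\pi(\ell\cap C)$ among the points of $H''\cap C''$ is a minor issue, being an open condition inside the set of hyperplanes of the given type.
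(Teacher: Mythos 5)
Your argument is correct and follows essentially the same route as the paper's proof: induction on $r$, with the base case read off from the remarks on elliptic quartics in $\p^3$, and the inductive step carried out by projecting from a real line through two real points (for $t\ge 1$) or through a conjugate pair (for $t=0$) and pulling back a hyperplane of the appropriate type. The extra care you take in verifying that the projected curve is a real elliptic normal curve of degree $2r-2$ (via very ampleness) and in perturbing $H''$ to avoid the base points is welcome but supplementary detail on a step the paper treats as immediate.
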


\begin{proof}
We prove the theorem by induction on $r$.

Start with the case $r=2$, that is a real elliptic quartic curve $C$ in $\p^3$. 
By Proposition \ref{cases-ball} there exist points of type s($i$), for $i=1, \dots, 4$. 
For such points $P$ consider the span $H=\langle l(P),l'(P)\rangle$ of the two secant lines 
$l(P),l'(P)$ to $C$, passing through $P$. Hence
\begin{itemize}
\item if $P$ has type s(1) then $H$ is a hyperplane of type $(4,0)$
\item if $P$ has type s(2) then $H$ is a hyperplane of type $(2,2)$
\item if $P$ has type s(3) or s(4) then $H$ is a hyperplane of type $(0,4)$
\end{itemize}

Assume the statement is true for $r-1$. 
Given $C\subset \p^{2r-1}$, choose two real points $Q_0,Q_1\in C$. They exist since $C$ is real. 
Then consider the line $L=\langle Q_0,Q_1\rangle$, and note that $L$ is real.
Let $\pi_L:\p^{2r-1}\to \p^{2r-3}$ be the projection from $L$. The image $C'=\pi_L(C)$ is a real 
elliptic curve of degree $2r-2$. By induction there exists a hyperplane $H'\subset \p^{2r-3}$ of type 
$(2t,2r-2-2t)$ for $C'$, for all $t=0, \dots, r-1$. The inverse image $H=\pi_L^{-1}(H')$ is a hyperplane 
of type $(2t+2,2r-2-2t)$,for all $t=0, \dots, r-1$ or equivalently, of type $(2t,2r-2t)$,
for all $t=1, \dots, r$. 

To obtain the hyperplane of type $(0,2r)$ we proceed in a similar way. We first choose two complex 
conjugate points $Q_0,Q_1\in C$, and consider the line $M=\langle Q_0,Q_1\rangle$. Note that $M$ 
is still real.
Let $\pi_M:\p^{2r-1}\to \p^{2r-3}$ be the projection from $M$. The image $C'=\pi_M(C)$ is a 
real elliptic curve of degree $2r-2$. By induction there exists a hyperplane $H'\subset \p^{2r-3}$ 
of type $(0,2r-2)$ for $C'$, for all $t=0, \dots, r-1$. The inverse image $H=\pi_M^{-1}(H')$ is a 
hyperplane of type $(0,2r)$.
\end{proof}

\begin{thm}\label{elli1}
For any real elliptic normal curve $C\subset \p^{2r-1}$ of degree $2r$ there are real points 
$P\in \p^{2r-1}$ which lie in the intersection of two $(r-1)$-spaces $\Pi_1$ and $\Pi_2$ where 
$\Pi_1$  intersects $C$ in $r$ real points,
while $\Pi_2$ intersects $C$ in $r$ points, some of them non-real.
Moreover, there exists a non empty euclidean-open subset of $\p^{2r-1}_\r$ entirely composed 
of points with the same property as $P$.
\end{thm}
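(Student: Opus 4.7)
The plan is to produce $P$ explicitly by invoking Lemma \ref{hypertype}, splitting a real hyperplane section of $C$ into two conjugation-invariant halves, and then promoting one such point to a whole euclidean neighborhood by continuity of the two-secant structure.

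\emph{Construction of $P$.} Apply Lemma \ref{hypertype} with $t=r-1$ to obtain a real hyperplane $H\subset \p^{2r-1}$ of type $(2r-2,2)$; label $H\cap C=\{p_1,\dots,p_{2r-2},q,\bar q\}$ with $p_i$ real and $q,\bar q$ a conjugate non-real pair. Split this degree-$2r$ divisor into two conjugation-invariant effective divisors of degree $r$:
$$ D_1 = p_1+\cdots+p_r, \qquad D_2 = p_{r+1}+\cdots+p_{2r-2}+q+\bar q, $$
and set $\Pi_i := \Span{D_i}$. Both $\Pi_i$ are then real $(r-1)$-dimensional subspaces of $H$; $\Pi_1$ meets $C$ in the $r$ real points of $D_1$, whereas $\Pi_2$ meets $C$ in $D_2$, which contains the non-real pair $q,\bar q$.

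\emph{Extraction of the meeting point.} Because $\Pi_1,\Pi_2\subset H$ and $\dim\Pi_1+\dim\Pi_2=2r-2=\dim H$, the intersection $\Pi_1\cap\Pi_2$ is non-empty. This construction reverses the classical one of \cite[Prop.~2.5]{CCi06}: for a generic $P\in\p^{2r-1}$ the two $r$-secant $(r-1)$-spaces through $P$ span a hyperplane, and conversely any hyperplane section of $C$ splits, in finitely many ways, into pairs of half-divisors whose spans meet in a point. Perturbing $H$ within the euclidean-open set of real type-$(2r-2,2)$ hyperplanes, we may arrange $\Pi_1\cap\Pi_2=\{P\}$ with $P$ real and generic in the sense of \cite[Prop.~2.5]{CCi06}; the two $r$-secant $(r-1)$-spaces through such a $P$ are then exactly $\Pi_1$ and $\Pi_2$.

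\emph{Opening up a neighborhood.} Near such a generic $P$, away from the tangent developable of $C$ and the ramification locus of the two-secant map, continuous local branches $P'\mapsto(\Pi_1(P'),\Pi_2(P'))$ exist, exactly as in the argument used in Proposition \ref{cases-ball} for the $r=2$ case. The conditions ``$\Pi_1(P')\cap C$ consists of $r$ distinct real points'' and ``$\Pi_2(P')\cap C$ contains a non-real point'' are each euclidean-open in the real Grassmannian of $(r-1)$-planes: real transverse intersections persist under small perturbation, and so do non-real conjugate pairs. Hence both conditions survive on a small real neighborhood of $P$ in $\p^{2r-1}_\r$, delivering the desired non-empty open subset.

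\emph{Main obstacle.} The delicate point is justifying the genericity claim in the second step: one must rule out the Zariski-closed bad locus where the two spans meet in dimension $>0$, or where some $p_i$ lies in the span of the others, or where a third $r$-secant $(r-1)$-space passes through $P$. A dimension count — the type-$(2r-2,2)$ hyperplanes form a euclidean-open family of real dimension $2r-1$ (the full dual projective space) and the map to the partition point $P$ has finite fibers determined by the unordered splitting — should show that the image of the construction is $(2r-1)$-dimensional, hence not contained in any proper algebraic subset, so a generic $H$ yields the required $P$.
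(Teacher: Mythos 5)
Your construction follows the same strategy as the paper's proof: apply Lemma~\ref{hypertype} with $t=r-1$ to get a $(2r-2,2)$-type hyperplane $H$, split $C\cap H$ into two conjugation-invariant degree-$r$ halves, take their spans $\Pi_1,\Pi_2$, and open up a euclidean neighborhood by continuity. However, the ``main obstacle'' you flag is a genuine gap in your argument as written: you only establish $\Pi_1\cap\Pi_2\neq\emptyset$, and then propose an uncompleted dimension count to ensure the intersection is a single point for \emph{generic} $H$. This aims at a non-issue, because the intersection is \emph{always} zero-dimensional, not merely generically so. The paper closes the gap with a short Bezout-type argument: if $\dim(\Pi_1\cap\Pi_2)=t\geq 1$, then $\langle\Pi_1,\Pi_2\rangle$ has dimension at most $2r-2-t$, and adjoining $t$ further general points of $C$ produces a hyperplane meeting the irreducible non-degenerate curve $C$ of degree $2r$ in $2r+t$ points, a contradiction. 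Similarly, your worry that some $p_i$ might lie in the span of the others cannot occur: on an elliptic normal curve of degree $2r$ in $\p^{2r-1}$, any $\leq 2r-1$ distinct points are linearly independent, so $\Pi_1,\Pi_2$ are automatically $(r-1)$-dimensional. The continuity step is essentially equivalent in both proofs (the paper uses an incidence variety $W$ and the composition $\phi = f_{2|G'}\circ f_1^{-1}$ on a fixed stratum of the $(2r)!$-sheeted covering over a small set of hyperplanes; you use local branches of the two-secant map in the spirit of Proposition~\ref{cases-ball}). You should replace the hand-waving dimension count with the direct Bezout/non-degeneracy argument above; everything else in your sketch is sound.
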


\begin{proof}
By Lemma \ref{hypertype} we know there exists a hyperplane $H$ of type $(2r-2,2)$. 
Let $P_1, \dots, P_{2r}$ be the  points in $C\cap H$ and suppose that $P_1, \dots, P_{2r-2}$ 
are the real points and $P_{2r-1},P_{2r}$ are the non-real ones. Then we define
\[
\Pi_1=\langle P_1, \dots, P_d\rangle \mbox { and } \Pi_2=\langle P_{r+1}, \dots, P_{2r}\rangle 
\]

The spaces $\Pi_1$ and $\Pi_2$ lie in $H$. We claim that $\dim(\Pi_1\cap \Pi_2)=0$. Indeed, if 
$\dim(\Pi_1\cap \Pi_2)=t\geq 1$, then $\langle \Pi_1,\Pi_2\rangle$ would be a space of dimension 
at most $2r-2-t$ and taking $t$ general points $R_1, \dots, R_t$ in $C$, the hyperplane 
$\langle \Pi_1,\Pi_2, R_1, \dots, R_t\rangle$ would intersect $C$ in $2r+t$ points, which is a 
contradiction since $C$ is irreducible and non-degenerate.
The point $P$ such that $\{P\}=\Pi_1\cap \Pi_2$ is the point of the statement we are looking for.

For the second part of the statement, consider the incidence variety:
\[
W=\{(H, (P_1, \dots, P_{2r}))\,: H\in (\p^{2r-1})^\vee, P_i\in H\cap C\}
\]
where $(P_1, \dots, P_{2r})$ are ordered $2r-$uples.
Call $f_1$ the projection to the first factor and call $f_2$ the map sending $(P_1, \dots, P_{2r})$ to 
$\langle P_1, \dots, P_r\rangle \cap \langle P_{r+1}, \dots, P_{2r}\rangle$.

\[
\xymatrix{ & W \ar[dl]|{f_1} \ar[dr]|{f_2} \\ (\p^{2r-1})^\vee &&
\p^{2r-1} }
\]
Let $H$ be a hyperplane of type $(2r-2,2)$ constructed above. Notice that $f_1^{-1}(H)$ consists of
$(2r)!$ points, corresponding to the permutations of the set of $2r$ distinct points
$C\cap H$. If $G\subset (\p^{2r-1})^\vee$ is a small euclidean-open subset, containing $H$, such 
that $G$ does not intersect the set of hyperplanes which are tangent to $C$, then $f_1^{-1}(G)$ 
consists of $(2r)!$ strata, each isomorphic to $G$. Fix one of these strata $G'$, corresponding to the 
choice of an ordering of the points in $C\cap H$ where the real 
points are listed first. Then consider the (rational) map $G\to \p^{2r-1}$ defined as the composition 
$\phi:=f_{2|G'}\circ f_1^{-1}$. Clearly $\phi$ is continuous and $P=\phi(H)$, thus there exists an 
euclidean-open subset $B$ of $ \p^{2r-1}$, containing $P$, whose inverse image $\phi^{-1}$ sits 
in $G$. By construction, if $B$ is sufficiently small, for all $Q\in B$, there are two $r$-secant spaces 
to $C$ passing through $Q$, one of them meeting $C$ in $r$ real points, the other meeting $C$ in 
$r-2$ real points and $2$ non-real points.
\end{proof}

\begin{rem}\label{postelli}
With similar computations, one can prove the existence of euclidean-open subsets of $\p^{2r-1}$ 
whose real points sit
in two $r$-secant spaces to $C$, the first one meeting $C$ in $r-2a$ real points and $2a$ non-real points and the second one
meeting $C$ in $r-2b$ real and $2b$ non-real points, for any choice of $a,b$ with $2a+2b\leq r$
(including the cases where $a$ or $b$ are $0$).

Notice that, however, we are not completely free to choose where the conjugates of the $2a+2b$ 
non-real points are located.

For instance, for $C$ of degree $6$ in $\p^5$, we cannot find an euclidean-open subset of points $P$
sitting in two $3$-secant planes $\Pi_1,\Pi_2$, such that $\Pi_1\cap C=\{A,B_1,B_2\}$, 
$\Pi_2\cap C=\{A',B'_1,B'_2\}$, $A,A'$ real and each $B_i$ conjugate to $B'_i$. Namely, $\Pi_1,\Pi_2$ 
cannot be real (e.g. $\Pi_1$ cannot contain $B'_1$, for no plane meets $C$ in $4$ points) and they 
are not conjugate each other (because $\Pi_1$ cannot contain $A'$), so $P$ cannot be real, 
if it is general enough.   
\end{rem}

We will need a relative version of Theorem \ref{elli1} to families of elliptic normal curves.

\begin{thm}\label{elli}
Consider an irreducible family $C_t$ of real elliptic normal curves of degree $2r$ in 
some big projective space $\p^M$. 
Call $\Gamma\subset\p^M$  the union of the curves $C_t$  and let $Y$ be the union of the
$\p^{2r-1}$'s $Y_t$ spanned by the curves $C_t$. Assume that a general real point $T\in Y$ sits 
in only two real $(r-1)$-spaces that meet $\Gamma$ in $r$ points. Then there exist
non trivial euclidean open sets  $U_1,U_2,U_3\subset Y$ such that, respectively:

\begin{itemize}
\item for all $T\in U_1$ one of the two (real) spaces  passing through $T$ 
  and $r$-secant to $\Gamma$ meets $\Gamma$ in $r$ real points and the other meets  
$\Gamma$ in some non-real point;

\item for all $T\in U_2$ both of the two (real) spaces  passing through $T$ 
  and $r$-secant to $\Gamma$ meet $\Gamma$ in some non-real point;

\item for all $T\in U_3$ both of the two (real) spaces  passing through $T$ 
 and $r$-secant  to $\Gamma$ meet $\Gamma$ in $r$ real points.
\end{itemize}
\end{thm}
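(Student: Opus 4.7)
The plan is to relativize the argument of Theorem \ref{elli1} over the parameter space of the family $\{C_t\}$, reducing the construction of the open sets $U_i \subset Y$ to open sets produced fiberwise by Theorem \ref{elli1} together with Remark \ref{postelli}.

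First, I fix a real parameter $t_0$ so that $C_{t_0}$ is a real elliptic normal curve spanning $Y_{t_0}\cong \p^{2r-1}$. Theorem \ref{elli1} and Remark \ref{postelli} produce non-empty euclidean open subsets $V_1^0, V_2^0, V_3^0 \subset Y_{t_0}(\r)$ whose points lie on two $r$-secant $(r-1)$-spaces to $C_{t_0}$ of the three prescribed reality types. Shrinking, I assume each $V_i^0$ avoids the tangent developable of $C_{t_0}$ and the finitely many degeneracy loci (such as vertices of cones through $C_{t_0}$); the hypothesis of the theorem then guarantees that for generic $T \in V_i^0$ the two $r$-secant spaces to $\Gamma$ through $T$ are exactly the two coming from $C_{t_0}$, as otherwise the generic fibre cardinality over $Y$ would exceed two.

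Next, I introduce the relative incidence variety
\[
W = \bigl\{\,(T,(P_1,\dots,P_{2r}))\ :\ T \in Y,\ P_j \in \Gamma,\ T \in \langle P_1,\dots,P_r\rangle \cap \langle P_{r+1},\dots,P_{2r}\rangle\,\bigr\},
\]
with the forgetful projection $f_1 : W \to Y$ and the tuple-recording map $f_2 : W \to \Gamma_\c^{2r}$. By hypothesis, $f_1$ has generic fibre of finite cardinality and restricts to a covering map in the euclidean topology over a Zariski open subset of $Y$; so over a sufficiently small euclidean ball $B_i \subset Y(\r)$ around a well-chosen $T_i \in V_i^0$, the preimage $f_1^{-1}(B_i)$ splits into disjoint strata, each mapping homeomorphically onto $B_i$ via $f_1$. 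I select the stratum $G_i$ through the appropriate preimage of $T_i$; the composition $\phi_i := f_2 \circ (f_1|_{G_i})^{-1} : B_i \to \Gamma_\c^{2r}$ is continuous and commutes with conjugation, and $\phi_i(T_i)$ has the prescribed reality pattern by construction. After further shrinking $B_i$ so that the $0$-cycle $\langle P_1,\dots,P_r\rangle \cap \Gamma + \langle P_{r+1},\dots,P_{2r}\rangle \cap \Gamma$ stays reduced, the number of real entries of $\phi_i(T)$ is forced to remain constant, since a real entry can only cease to be real by colliding with another real entry and splitting into a conjugate pair. Hence every $T \in B_i$ inherits the reality type of $T_i$, and I take $U_i := B_i$.

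The main technical obstacle is to secure the covering property of $f_1$ and the reducedness of the intersection cycle simultaneously near $T_i$. Both are open conditions whose complements are proper closed real subvarieties of $Y$: the ramification locus of $f_1$ is proper closed precisely because the hypothesis pins the generic fibre cardinality of $f_1$, and the non-reducedness locus is the discriminant of the intersection with $\Gamma$. Choosing $T_i$ generic in $V_i^0$, and possibly shrinking $V_i^0$ to stay inside the intersection of the two good loci, both conditions are satisfied on a euclidean neighborhood $B_i$, and the continuity arguments above deliver the three desired open sets $U_1, U_2, U_3$ in $Y$.
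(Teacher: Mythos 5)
Your proposal is correct and follows essentially the same route as the paper: start from the pointwise result of Theorem \ref{elli1} and Remark \ref{postelli} on a single member of the family, argue that the two $r$-secant spaces to $\Gamma$ through a general point of $Y_{t_0}$ coincide with the two $r$-secant spaces to $C_{t_0}$, and then propagate the reality pattern to a euclidean neighborhood in $Y$ by continuity. The paper compresses this into a two-sentence continuity argument, while you make it explicit by introducing a relative incidence variety $W \to Y$ with its local-homeomorphism structure, mirroring the second half of the proof of Theorem \ref{elli1}.

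Two small points worth tightening. First, you should take $t_0$ to be a \emph{general} real parameter, not an arbitrary one: the hypothesis about exactly two $r$-secant spaces to $\Gamma$ holds for a general real $T\in Y$, and a point of $Y_{t_0}$ is general in $Y$ only if $t_0$ itself is general in the parameter space. Second, the phrase ``$\phi_i$ commutes with conjugation'' is not literally true as a statement about the map $B_i \to \Gamma_\c^{2r}$ (the source consists of real points, so commuting with conjugation would force all entries to be real). What is actually used is that, for $T$ real, $\overline{\phi_i(T)}$ is another point of $W$ over $T$; since at $T_i$ one has $\overline{\phi_i(T_i)} = \sigma\cdot\phi_i(T_i)$ for a fixed permutation $\sigma$ (preserving each block of $r$ because the two $(r-1)$-spaces are real), and since both $\overline{\phi_i(T)}$ and $\sigma\cdot\phi_i(T)$ vary continuously inside the discrete fiber of $f_1$, the identity $\overline{\phi_i(T)} = \sigma\cdot\phi_i(T)$ persists on $B_i$, which is exactly what forces the reality pattern to be locally constant. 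Your ``collision'' heuristic captures this but the rigorous justification goes through the persistence of $\sigma$. Neither issue affects the correctness of the overall argument.
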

\begin{proof} Fix a curve $C_0$ general in the family, which spans the space $Y_0$ and let 
$T$ be a general real point of the euclidean open subset of $Y_0$ found in Theorem \ref{elli1}. 
The two (real) $r$-secant spaces to $\Gamma$ passing through $T$ are indeed the two $r$-secant 
spaces to $C_0$. By moving $T$ continuously in a family $T_t$ of real points in $Y$, the points of 
intersection of $\Gamma$ with the two $r$-secant
spaces passing through $T_t$ also move continuously over $\r$. The claim follows from Theorem 
\ref{elli1} and Remark \ref{postelli}. 
\end{proof}

\section{Identifiability over $\r$ and elliptic curves}\label{sec:NonIdR}

In this section we apply the previous construction to the varieties listed in the table of the introduction. 
In the papers listed in the table one finds the proofs that the corresponding tensors have a  decomposition  
as follows.

\begin{rem} \label{situat}
Fix any space of tensors $X$ listed in Table \ref{tab1} and let $r$ be the corresponding value of the rank
and  $\Sigma$ the variety  of tensors of rank $1$ in $X$. Then through $r$ general points of $\Sigma$ 
there exists a unique elliptic normal curve $C$ of degree $d=2r$. A general tensor $T\in X$ 
of rank $r$ has exactly two decompositions as a sum of $r$ points in $\Sigma$. The two decompositions 
are  obtained as follows. Fix one decomposition  $P_1,\dots,P_r$ of $T$. Since $T$ is general,  
the $P_i's$ determine an elliptic curve $C\subset\Sigma$, which spans a $\p^{2r-1}$ where $T$ lies. 
There are two $r$-secant spaces to $\Sigma$ containing $T$: they determine the two decompositions
of $T$.

Notice that the elliptic curve $C$ described above is irreducible, for a general choice of $T$.
Indeed one can interchange any of the points $P_i$, by moving them in the irreducible variety $\Sigma$.
\end{rem}

\begin{prop} \label{realell}
Fix any space of tensors $X$ listed in Table \ref{tab1}. Let $r$ be the corresponding value of the rank
and let $\Sigma$ be the variety  of tensors of rank $1$ in $X$. Then for $r$ general real points of $\Sigma$ 
the unique elliptic normal curve $C$ of degree $d=2r$ passing through them is real.
\end{prop}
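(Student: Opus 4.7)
The plan is to combine the uniqueness of the elliptic normal curve stated in Remark \ref{situat} with a Galois descent argument, after first checking that generic real $r$-tuples are still \emph{general} in the sense needed for that remark.

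First, I would verify that the hypothesis of Remark \ref{situat} is satisfied at a generic real point of $\Sigma^r$. The locus $Z\subsetneq \Sigma^r$ of $r$-tuples for which the elliptic normal curve of degree $2r$ fails to exist, to be unique, or to be irreducible, is a proper Zariski-closed subset of $\Sigma^r$. For each variety $\Sigma$ in Table \ref{tab1} (Segre, Veronese, or the Plücker-embedded Grassmannian) $\Sigma$ is defined by real equations and its real locus $\Sigma(\r)$ is Zariski dense of maximal real dimension equal to $\dim_\c\Sigma$. Therefore $\Sigma(\r)^r$ is not contained in $Z$, and for a generic choice of real points $P_1,\dots,P_r\in\Sigma(\r)$ there is a unique irreducible elliptic normal curve $C\subset \Sigma$ of degree $2r$ passing through them.

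Next, I would use this uniqueness to force $C$ to be defined over $\r$. Complex conjugation $\sigma$ fixes each $P_i$ pointwise, and sends the homogeneous ideal of $C$ to the homogeneous ideal of an irreducible elliptic normal curve $\bar{C}=\sigma(C)$ of degree $2r$ in $\Sigma$ passing through $P_1,\dots,P_r$. By the uniqueness above, $\bar{C}=C$. Hence $I(C)$ is Galois-stable and therefore admits a set of generators with real coefficients, i.e.\ $C$ is cut out by real equations.

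Finally, I would check that $C$ has infinitely many real points, so that $C$ is a \emph{real} elliptic normal curve in the sense of Sections \ref{sec:Ell4} and \ref{sec:Ellodd}. By construction $C$ contains the real points $P_1,\dots,P_r$, and these are smooth points of $C$ since $C$ is smooth and irreducible. A smooth real projective curve of genus $1$ with at least one real point has real locus diffeomorphic to one or two copies of $S^1$; in particular $C(\r)$ is infinite. The main obstacle is really the first step: one must be sure that the genericity assumption of Remark \ref{situat} is compatible with restricting to real tuples, and for this one uses only that the varieties $\Sigma$ of Table \ref{tab1} are defined over $\r$ with Zariski-dense real loci, which is immediate for each entry. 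Everything else is a formal consequence of uniqueness plus the elementary statement about real points on a smooth real elliptic curve.
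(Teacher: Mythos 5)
Your proof is correct and follows essentially the same route as the paper's: you use the uniqueness of the elliptic normal curve through $r$ general points together with a conjugation argument (if $C\ne\bar C$, the points would lie on two distinct such curves) to conclude that $C$ is defined over $\r$. You additionally verify that $C(\r)$ is infinite because $C$ contains the real points $P_i$, which is a welcome explicit check of the ``real'' condition in the sense of Sections~\ref{sec:Ell4}--\ref{sec:Ellodd}, a detail the paper leaves implicit.
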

\begin{proof} First observe that the  existence of the unique curve $C$ fails for a choice of the $r$
points in a Zariski closed subset of $\Sigma^r$, thus it does not fail for a general choice of $r$ real points.

Now, it is clear that if the $r$ points are real, then the curve is real: otherwise the points would lie also
in the conjugate curve.

Alternatively, one can prove the claim following step by step the construction of the elliptic 
curve described in the papers listed in Table \ref{tab1}. For instance, for $(\c^2)^5$ and $r=5$, 
one obtains the elliptic curve through $P_1,\dots, P_5$ by taking  a $2$-dimensional family of elliptic 
curves in the Veronese embedding of $\p^1\times\p^1\times\p^1$ passing through $5$ general points 
(by taking hyperplane sections) and choosing the element of the family which embeds in the remaining 
two copies $\p^1\times\p^1$, passing through the projections of the $5$ points to those copies 
(see \cite[Proposition 4.1]{BocciC13}). 
Clearly when the points $P_i$'s are real, then also the curve $C$ comes out to be real.
\end{proof}

\begin{thm}\label{ultimo}  Let $X$ be any of the space of tensors listed in Table \ref{tab1} and 
consider the  corresponding value of the rank $r$.

Then there exist non trivial euclidean open subsets $U_1,U_2,U_3$ of the variety of real tensors of 
complex rank $r$ in $X$, 
whose elements $T$ have two decompositions, and:
\begin{itemize}
\item  for all $T\in U_1$, one decomposition is real and one is not
(thus the real rank is $r$ and $T$ is identifiable over $\r$);

\item for all $T\in U_2$, both decompositions are real (thus the real rank is $r$);

\item for all $T\in U_3$, both decompositions are non real (thus the real rank is bigger than $r$);
\end{itemize}
\end{thm}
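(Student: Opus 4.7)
My plan is to recognize Theorem~\ref{ultimo} as a direct specialization of Theorem~\ref{elli}, applied to the family of real elliptic normal curves inside $\Sigma$ that Remark~\ref{situat} assigns to decompositions of a general tensor of rank~$r$. For every $X$ in Table~\ref{tab1}, Remark~\ref{situat} tells us that the two decompositions of a general rank-$r$ tensor $T$ arise from the two $r$-secant $(r-1)$-spaces to a uniquely determined elliptic normal curve $C\subset\Sigma$ of degree $2r$. When $T$ is real, $C$ must be real: the conjugate curve $\bar C$ also supports two decompositions of $T$, and by the uniqueness part of Remark~\ref{situat} these must coincide with the two already known to lie on $C$, forcing $C=\bar C$. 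Alternatively, the explicit construction of $C$ recalled in the proof of Proposition~\ref{realell} is manifestly conjugation-equivariant.

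Set $\mathcal F$ to be this irreducible family of real elliptic normal curves of degree $2r$, $\Gamma:=\bigcup_t C_t$, and $Y:=\bigcup_t\langle C_t\rangle\subset X$. Then $Y$ coincides, up to a proper Zariski-closed subset, with the locus of real tensors of complex rank~$r$ in~$X$. To feed $(\mathcal F,\Gamma,Y)$ into Theorem~\ref{elli} one must check that a general real $T\in Y$ sits in exactly two \emph{real} $(r-1)$-spaces meeting $\Gamma$ in $r$ points. Remark~\ref{situat} provides the two complex such spaces, and complex conjugation permutes this pair as a whole; Theorem~\ref{elli1} (together with Proposition~\ref{cases-ball} in the base $r=2$) furnishes a non-empty euclidean open subset of $Y$ on which both spaces are individually real. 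Restricting to this open set, Theorem~\ref{elli} delivers three non-empty open subsets $V_1,V_2,V_3\subset Y$.

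It remains to translate and to verify the rank statements. Each point in the intersection of an $r$-secant space with $\Gamma\subset\Sigma$ is a rank-$1$ summand of the associated decomposition, and is real if and only if that summand is real. Thus the case of $V_1$ in Theorem~\ref{elli} (one space with $r$ real intersections, the other with some non-real) is exactly $U_1$ of Theorem~\ref{ultimo}; $V_3$ (both with $r$ real intersections) is $U_2$; and $V_2$ (both with some non-real intersection) is $U_3$. The real-rank claims follow from the ``exactly two decompositions'' clause of Remark~\ref{situat}: any length-$r$ real decomposition of $T$ must be one of these two, hence on $U_1\cup U_2$ the real rank equals $r$, while on $U_3$ it strictly exceeds~$r$; identifiability over $\r$ on $U_1$ is simply the statement that only one of the two complex decompositions is real. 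The main difficulty is verifying that both $r$-secant spaces can be taken individually real---equivalently, discarding the ``conjugate pair'' analog of point-type s(4)---which is exactly what restricting to the open subset produced by Theorem~\ref{elli1} achieves.
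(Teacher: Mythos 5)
Your proof follows essentially the same route as the paper's: identify the family of real elliptic normal curves through decomposition points, note that a general real tensor determines a real curve $C$ of degree $2r$, and then invoke Theorem~\ref{elli1} (single-curve case) and Theorem~\ref{elli} (family version) to produce the three euclidean open sets $U_1$, $U_2$, $U_3$. The one variation is cosmetic: you argue that $C$ is real via conjugation symmetry (if $T$ is real, the conjugate curve $\bar C$ also carries the two decompositions of $T$, so $\bar C=C$ by uniqueness), whereas the paper first invokes \cite{QiComonLim} to produce an open set of real tensors whose real rank equals $r$, so that the decomposition points themselves are real, and then applies Proposition~\ref{realell}; both arguments are correct and lead to the same conclusion.
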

\begin{proof} To prove the existence of $U_1$, fix a real tensor of real rank $r$ which is equal 
to the complex rank. 
By \cite[Lemma 5.2]{QiComonLim} there exists  an euclidean open subset $U$ in the space of tensors of 
$X$ with rank $r$, having such a property. The general real tensor in $U$ determines a set of $r$ real 
points in $\Sigma$, through which there exists a real elliptic curve $C$ which determines a linear space 
$L$ of dimension $2r-1$. $L$ sits in the closure of the variety of tensors of rank $r$ in $X$  and there 
exists a euclidean  open subset of $L$ formed by points $T$ such that only one of the two 
decompositions of $T$ with  respect to $\Sigma$ is real.
When $T$ moves in a Zariski open subset of the variety of tensors of rank $r$, the curve $C$ 
moves in an algebraic family. The claim now follows from Theorem \ref{elli}. 

The existence of $U_2$ and $U_3$ can be proved similarly.
\end{proof}

\section{Further examples}\label{furex}

In this section, we outline a general picture of cases in which the existence of euclidean open sets
whose elements are not identifiable over $\c$ but are identifiable over $\r$ can be excluded or
confirmed.

In particular, we will show that complex identifiability can be different from real identifiability
also in non trivial open subsets of tensors with generic rank.

Let us start by noticing, in a couple of remarks, that when a general tensor of given complex rank $r$ has infinitely
many decompositions with $r$ summands, then the existence of euclidean open sets of tensors
identifiable over $\r$ can be excluded.

\begin{rem}
Let $\p$ be a space of tensors whose generic element has complex rank $r$ and
infinitely many decompositions (up to permutations and rescaling) as a sum of $r$ tensors of rank $1$.
Then there are no euclidean open subsets of $\p$ whose general real elements $T$
have real rank $r$ and are identifiable over $\r$.

Namely, consider the abstract secant variety 
$$I=\{(P_1,\dots,P_r, T):\  rank(P_i)=1\  \forall i,\  T\in\langle P_1,\dots,P_r\rangle\},$$
(notice that the product is non-symmetric)
equipped with the natural projection $\pi:I\to \p$, which, by assumptions, dominates
$\p$, with positive dimensional fibers.
The singularities of the irreducible variety $I$ are contained in the locus where the points $P_i$ become
linearly dependent.  Thus, if the general real tensor $T$ of an euclidean open subset $U\subset\p$ 
has real rank $r$, then the fiber $\pi^{-1}(T)$, which has positive dimension, contains a smooth real point of $I$.
After taking a desingularization of $I$ and considering the Theorem of generic smoothness, it follows that 
$\pi^{-1}(T)$  is smooth at one real point.
Hence $\pi^{-1}(T)$ contains infinitely many real points (see \cite{BCR}, Section 2).
As a consequence, $T$ has infinitely many real decompositions.
\end{rem}

\begin{rem}
A totally analogue argument proves that when a general tensor $T$ of a given {\it sub-generic} rank $r$ has
infinitely many decompositions (up to permutations and rescaling) as a sum of $r$ tensors of rank $1$,
then there are no euclidean open subsets of the variety of tensors of rank $r$ whose general real elements
is identifiable over $\r$.

Notice that this case holds when the $r$-secant variety of a Segre or Veronese variety $X$ has
dimension smaller than the expected value, i.e. $X$ is $r$-defective. There are several known examples of
defective Segre or Veronese varieties, e.g. unbalanced Segre product (\cite {BocciCOtt13} section 8).
\end{rem}

Next, we point our attention to cases in which the general tensor of given rank has only a finite
number ($>1$) of decompositions over $\c$. 

\begin{rem}\label{rem:procedure}
A computer-aided procedure, based on \emph{homotopy continuation techniques} and \emph{monodromy loops}, implemented in the  softwares Bertini \cite{Bertini} and Matlab, in the spirit of \cite{AngeliniGaluppiMellaOtt} and \cite{BernDaleoHauensteinMour}, 
allowed us to find specific examples of symmetric tensors which are identifiable from the real point of view but not identifiable from the complex side. 

More precisely, we consider a general symmetric tensor $ T \in \p(Sym^{d}(\r^{n+1})) $ with rank $ r $ over $ \c $. We assume that $ d,n,r \in \n $ are such that $ r(n+1) = \binom{n+d}{d} $ (\emph{perfect case}) and they satisfy no exceptions of the Theorem of Alexander-Hirschowitz \cite{BrambOtt08}. 

We focus on the equation:
\begin{equation}\label{eq:form}
T - \lambda_{1}\ell_{1}^{d} - \ldots - \lambda_{r}\ell_{r}^{d} = 0
\end{equation}
with unknowns $ \ell_{i} = x_{0}+\sum_{h=1}^{n}l_{h}^{i}x_{h} \in (\p^{1}_{\c})^{\vee} $ and $ \lambda_{i} \in \c  $, for $ i \in \{1,\ldots,r\} $. By means of the identity principle of polynomials, (\ref{eq:form}) produces a (\emph{square}) polynomial system with $ \binom{n+d}{d} $ equations and unknowns, which we denote by $ \mathcal{T}_{(T)}([l_{1}^{1},\ldots,l_{n}^{1},\lambda_{1}], \ldots, [l_{1}^{r},\ldots,l_{n}^{r},\lambda_{r}]) $. Our aim is to determine the number of real solutions of $ \mathcal{T}_{(T)} $.

In practice, to get a general $ T \in \p(Sym^{d}(\r^{n+1})) $, we substitute to $ ([l_{1}^{1},\ldots,l_{n}^{1},\lambda_{1}], \ldots, [l_{1}^{r},\ldots,l_{n}^{r},\\ \lambda_{r}]) $ in $ \mathcal{T}_{(T)} $ random real numbers $ ([\overline{l}_{1}^{1},\ldots,\overline{l}_{n}^{1},\overline{\lambda}_{1}], \ldots, [\overline{l}_{1}^{r},\ldots,\overline{l}_{n}^{r},\overline{\lambda}_{r}]) $  and we compute the corresponding $ \overline{T} $, whose coefficients are called \emph{start parameters}. By construction, $ ([\overline{l}_{1}^{1},\ldots,\overline{l}_{n}^{1},\overline{\lambda}_{1}], \ldots, [\overline{l}_{1}^{r},\\ \ldots,\overline{l}_{n}^{r},\overline{\lambda}_{r}]) $ is a real solution of $ \mathcal{T}_{(\overline{T})} $, i.e. a real \emph{startpoint}. 

Therefore we consider two square polynomial systems $ \mathcal{T}_{1} $ and $ \mathcal{T}_{2} $ obtained from $ \mathcal{T}_{(\overline{T})}$ by replacing the start parameters with random complex numbers and we construct $ 3 $ segment homotopies
$$ H_{i} : \C^{r(n+1)} \times [0,1] \to \C^{r(n+1)}, \, i \in \{0,1,2\} $$
in a way such that $H_{0}$ is between $ \mathcal{T}_{(\overline{T})} $ and $ \mathcal{T}_{1} $, $ H_{1} $ is between $ \mathcal{T}_{1} $ and $ \mathcal{T}_{2} $, $ H_{2} $ is between $ \overline{T}_{2} $ and $ \mathcal{T}_{(\overline{T})} $. With $ H_{0} $, we connect the startpoint $ ([\overline{l}_{1}^{1},\ldots,\overline{l}_{n}^{1},\overline{\lambda}_{1}], \ldots, [\overline{l}_{1}^{r}, \ldots,\overline{l}_{n}^{r},\overline{\lambda}_{r}]) $ to a solution (\emph{endpoint}) of $ \mathcal{T}_{1} $, which therefore becomes a startpoint for the second step given by $ H_{1} $, and so on. 

At the end of this triangle-loop, we check if the output is a solution of the polynomial system $ \mathcal{T}_{(\overline{T})} $ different from the starting one. If this is not the case, we restart the procedure: indeed, as stated before this remark, we are assuming that the number of solutions of $ \mathcal{T}_{(\overline{T})} $ is finite but greater than $ 1 $. Otherwise we iterate this technique with these two startingpoints, and so on. 

At certain point, the number of solutions of $ \mathcal{T}_{(\overline{T})} $ stabilizes and, as output, we get the set of all the decompositions of $ \overline{T} $ over $ \c $. If $ ([\overline{l}_{1}^{1},\ldots,\overline{l}_{n}^{1},\overline{\lambda}_{1}], \ldots, [\overline{l}_{1}^{r}, \ldots,\overline{l}_{n}^{r},\overline{\lambda}_{r}]) $ is the unique solution with real entries, then $ \overline{T} $ is a real symmetric tensor which is identifiable over $ \r $ but not over $ \c $.    
\end{rem}

In the following, we describe two examples in which we apply the procedure of Remark \ref{rem:procedure}, obtaining that real identifiability holds in euclidean open subsets composed by tensors of generic rank. 

\begin{example} Let $T$ be a general symmetric tensor of type $3\times 3\times\dots \times 3$ ($7$ times). Dixon and Stuart proved that $T$ has rank $12$ over $\c$ and it can be decomposed in exactly $5$ ways as a sum of $12$ tensors of 
rank $1$ (see \cite[Theorem 3.1]{RaneSchr00}). 

By applying our procedure to the tensor $ \overline{T} $, arising from \ref{eq:form} with $ d = 7, \,n = 2,\, r = 12  $ and startpoint
$$  [l_{1}^{1},l_{2}^{1},\lambda_{1}] = [-3.831393646843184, 1.346964775131610\cdot10^{-1},2.425782032500251\cdot10^{2}] $$
$$  [l_{1}^{2},l_{2}^{2},\lambda_{2}] = [9.931270838081495\cdot10^{-1},-6.769701755660390\cdot10^{-1},4.146536442894879\cdot10^{2}] $$
$$  [l_{1}^{3},l_{2}^{3},\lambda_{3}] = [3.183385725212400,-7.633860595893790\cdot10^{-1},4.843082801697150\cdot10^{2}] $$
$$  [l_{1}^{4},l_{2}^{4},\lambda_{4}] = [-8.878812851871381\cdot10^{-1},9.326430222177290\cdot10^{-1},
-3.093559475729942\cdot10^{1}] $$
$$  [l_{1}^{5},l_{2}^{5},\lambda_{5}] = [-8.333546205381460\cdot10^{-1},4.787791245905811,5.913320307260028\cdot10^{2}] $$
$$  [l_{1}^{6},l_{2}^{6},\lambda_{6}] = [1.150535726607133,-7.356530267574411,1.863359371761127\cdot10^{2}] $$
$$  [l_{1}^{7},l_{2}^{7},\lambda_{7}] = [-6.333358363820080\cdot10^{-1},3.556043275765582,-6.986594239306317\cdot10^{2}] $$
$$  [l_{1}^{8},l_{2}^{8},\lambda_{8}] = [2.649721933021775,-2.942789804855117,9.082119499105495\cdot10^{1}] $$
$$  [l_{1}^{9},l_{2}^{9},\lambda_{9}] = [9.281823004496396\cdot10^{-1},5.416247221839678\cdot10^{-1},-3.774941091391256\cdot10^{1}] $$
$$  [l_{1}^{10},l_{2}^{10},\lambda_{10}] = [-3.760716164753004,1.290194389580469,-8.149598050955672\cdot10^{-1}] $$
$$  [l_{1}^{11},l_{2}^{11},\lambda_{11}] = [2.159937720250393,-1.622029661864421,5.360726064748198] $$
$$  [l_{1}^{12},l_{2}^{12},\lambda_{12}] = [-8.097853608809100\cdot10^{-1},5.078077230490563\cdot10^{-1},-1.967556570270287\cdot10^{1}] $$
we explicitly get the five solutions $ \overline{T}=T_{1},T_{2}, T_{3}, T_{4}, T_{5} $ of $ \mathcal{T}_{(\overline{T})} $ (for more details, see the ancillary file, available online: arxiv.org/src/1608.07197v3/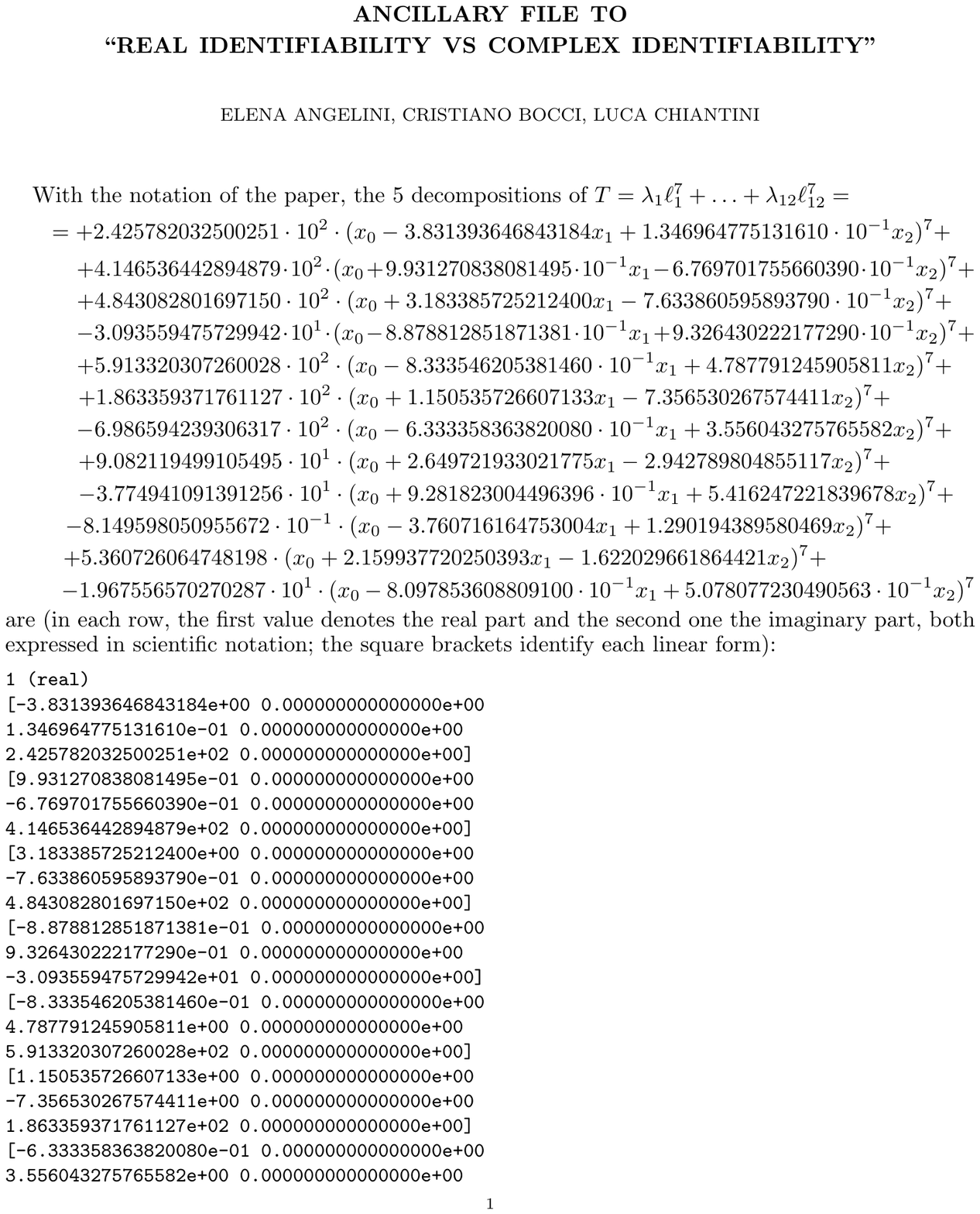): except $ T_{1} $, they are all non-real, in particular $ T_{2} $ and $ T_{3} $ are autoconjugate and, up to reordering rank-1 tensors, $T_{4}$ and $T_{5}$ are conjugate.       

Moving $\overline{T}$ in a small euclidean open subset over the reals, only one decomposition remains
real, because the property of being real is open in the set of decompositions (see 
\cite{MichalekMoonSturmVentura}, Proposition 5.6). It turns out that there is a nontrivial euclidean open subset of $\p(\sym^7(\r^3))$ whose elements have only one real decomposition, plus $4$ non-real ones.

Another instance of the same phenomenon is outlined in \cite[Example 5.6]{MichalekMoonSturmVentura}. 
The authors find one real tensor $\overline{T}\in\p(\sym^7(\r^3))$, which sits in a space $L$ of dimension $11$ 
containing one tangent line and $10$ points of the Veronese variety $v_7(\p^2)$. The tensor $\overline{T}$ has only $4$ proper decompositions, none of which is real. The fifth decomposition is only a {\it cactus} decomposition, since it contains a double point $P$. Yet, all the points of this cactus decomposition are real. 
Moving $L$ properly, $P$ splits in two real points of $v_7(\p^2)$. Thus  suitable euclidean open subsets near $\overline{T}$ are formed by real tensors with five proper decompositions, only one of which is real.
\end{example}

\begin{example} Let $T$ be a general symmetric tensor of type $ 3 \times 3 \times \cdots \times 3 $ ($ 8 $ times). 
Then it is well known that $T$ has rank $15$ over $ \c $ and it can be decomposed in exactly $16$ ways as a sum of $15$ rank $1$ tensors  (see \cite[Theorem 1.7]{RaneSchr00}).

Arguing as above, but with much more computational effort, one finds an euclidean open subset of $\p(\sym^8(\r^3))$ whose elements have only one real decomposition, plus $15$ non-real ones
(for more details, see the ancillary file, available online: arxiv.org/src/1608.07197v3/anc/CRidentAnc.pdf).
\end{example}

We show, on the other hand, that there are also cases in which no euclidean open subset of the variety of
tensors with fixed rank can be filled by tensors with only one real decomposition.

\begin{example}\label{norealid} Consider the variety of tensors $T$ of rank $5$ (submaximal) in
$\c^3\otimes\c^3\otimes\c^6$. It is known that the general tensor of this type
is not identifiable over $\c$ (see e.g. \cite[Proposition 8.3]{BocciCOtt13}).
Namely, the decompositions of general tensor of rank $5$ in $\c^3\otimes\c^3\otimes\c^6$
correspond, under a contraction map over the last factor, to $5$ points of the Segre
variety $\p^2\times\p^2\subset \p^8$ which span a space of (projective) dimension $4$.
Since the degree of $\p^2\times\p^2$ in $\p^8$ is even ($=6$), then a linear space
spanned by $5$ general real points meets $\p^2\times\p^2$ in $6$ real points.
Thus any sufficiently general tensor $T$ of rank $5$ has $6$ different real
decompositions. 
\end{example}    
 
The situation outlined in Example \ref{norealid} indeed occurs for tensors of rank $a_q$
 in any {\it almost} unbalanced
Segre product $\p^{a_1}\times\dots\times\p^{a_q}$, with $a_q=\Pi_{i=1}^{q-1}(a_i+1)- 
\sum_{i=1}^{q-1}a_i$, whenever the degree $D$ of the Segre product
$\p^{a_1}\times\dots\times\p^{a_{q-1}}$ satisfies the condition that $D-a_q$ is odd
(see \cite[Proposition 8.3]{BocciCOtt13} again).

We wonder if for almost unbalanced products with $D-a_q$ even, one can always find an
euclidean open subset of the variety of tensors of rank $a_q$ whose elements are
identifiable over $\r$.\smallskip
 
\begin{example} At least in specific examples, we can prove the real identifiability in euclidean open
sets of real secant varieties of almost unbalanced tensors.

Take for instance tensors $T$ of type $3\times 5\times 10$ and rank $9$. The degree $D$ of the Segre embedding
of $\p^2\times\p^4$ in $\p^{14}$ is $15$. Under a contraction over the last factor, 
the decompositions of $T$  are determined by the intersections of
the Segre emebedding $X$ of   $\p^2\times\p^4$ with a linear space $L$ of dimension $8$ in $\p^{14}$.
By means of a computer-aided procedure (see the ancillary file, available online: arxiv.org/src/1608.07197v3/anc/CRidentAnc.pdf),
one can find a real linear space $L$ so that $X\cap L$ has $9$ real points and $6$ non-real points.
Consequently, one can find $9$ tensors of rank $1$ and type $3\times 5\times 10$ whose span in $\p^{149}$
contains general elements with $\binom{15}9$ decompositions, only one of which is real.
The property of intersecting $X$ in $9$ real and $6$ non-real points is maintained by moving
$L$ generically in the variety of real $8$-spaces in $\p^{14}$. Thus we find an euclidean open subset 
of the variety of tensors of type $3\times 5\times 10$  and rank $9$, whose elements are identifiable over $\r$.
\end{example}

\bibliographystyle{amsplain}
\bibliography{biblioLuca}

\end{document}